\documentclass[12pt]{article}

\setlength\topmargin{0mm}
\setlength\headheight{0mm}
\setlength\headsep{0mm}
\setlength\topskip{0mm}
\setlength\textheight{230mm}
\setlength\footskip{20mm}

\setlength\oddsidemargin{0mm}
\setlength\evensidemargin{0mm}
\setlength\textwidth{160mm}

\usepackage{verbatim}
\usepackage{amssymb}
\usepackage{amsbsy}
\usepackage{amscd}
\usepackage{amsmath}
\usepackage{amsthm}
\usepackage[mathscr]{eucal}

\numberwithin{defn}{section}

\theoremstyle{plain}
\newtheorem{Thm}{Theorem}[section]

\theoremstyle{definition}

\newtheorem{Rem}[Thm]{Remark}


\newcommand{\bR}{\ensuremath{\mathbb{R}}}

\newcommand{\cC}{\ensuremath{\mathcal{C}}}

\newcommand{\cF}{\ensuremath{\mathcal{F}}}

\newcommand{\vn}{\ensuremath{\mbox{{\boldmath $n$}}}}


\numberwithin{equation}{section}

\makeatletter
\renewcommand\section{\@startsection {section}{1}{\z@}%
                                   {-3.5ex \@plus -1ex \@minus -.2ex}%
                                   {2.3ex \@plus.2ex}%
                                   {\normalfont\large\bf}}
\makeatother


\newcommand{\eps}{\ensuremath{\varepsilon}}
\newcommand{\e}{{\rm e}}
\renewcommand{\d}{{\rm d}}

\newcommand{\law}{\stackrel{{\rm law}}{=}}

\newcommand{\absol}[1]{\left| #1 \right|} 
\newcommand{\rbra}[1]{\left( #1 \right)} 
\newcommand{\cbra}[1]{\left\{ #1 \right\}} 
\newcommand{\sbra}[1]{\left[ #1 \right]} 

\newcommand{\uL}{\underline{L}}
\newcommand{\uR}{\underline{R}}
\newcommand{\uX}{\underline{X}}

\begin{document}

\begin{center}
{\Large \bf    
Two kinds of conditionings for stable L\'evy processes
}
\end{center}
\begin{center}
Kouji \textsc{Yano}
\end{center}

\footnotetext{
Department of Mathematics, Graduate School of Science,
Kobe University, Kobe, Japan.}
\footnotetext{
Research supported by KAKENHI (20740060)}

\bigskip

\begin{abstract}
Two kinds of conditionings 
for one-dimensional stable L\'evy processes are discussed 
via $ h $-transforms of excursion measures: 
One is to stay positive, and the other is to avoid the origin. 
\end{abstract}

\section{Introduction}

It is well-known that 
a one-dimensional Brownian motion conditioned to stay positive 
is a three-dimensional Bessel process. 
As an easy consequence, it follows that 
the former conditioned to avoid the origin 
is a symmetrized one of the latter. 

The aim of the present article 
is to give a brief survey on these two different kinds of conditionings 
for one-dimensional stable L\'evy processes 
via $ h $-transforms of It\^o's excursion measures. 

The organization of this article is as follows. 
In Section \ref{sec: BM}, 
we recall the conditionings for Brownian motions. 
In Section \ref{sec: stay positive}, 
we give a review on the conditioning 
for stable L\'evy processes to stay positive. 
In Section \ref{sec: avoid 0}, 
we present results on the conditioning 
for symmetric stable L\'evy processes to avoid the origin.

\section{Conditionings for Brownian motions}
\label{sec: BM}

We recall the conditionings for Brownian motions. 
For the details, see, e.g., 
\cite[\S III.4.3]{MR1011252} 
and 
\cite[\S VI.3 and Chap.XII]{MR1725357}. 

Let $ (X_t) $ denote the coordinate process 
on the space of c\`adl\`ag functions 
and $ (\cF_t) $ its natural filtration. 
Set $ \cF_{\infty } = \sigma(\cup_{t>0} \cF_t) $. 
For $ t \ge 0 $, we write $ \theta_t $ for the shift operator: 
$ X_s \circ \theta_t = X_{t+s} $. 
For $ 0<t<\infty $, a functional $ Z_t $ is called {\em $ \cF_t $-nice} 
if $ Z_t $ is of the form $ Z_t=f(X_{t_1},\ldots,X_{t_n}) $ 
for some $ 0 < t_1 < \ldots < t_n < t $ 
and some continuous function $ f:\bR^n \to \bR $ 
which vanishes at infinity. 

Let $ W_x $ denote the law of the one-dimensional Brownian motion 
starting from $ x \in \bR $.

\subsection{Brownian motions conditioned to stay positive}

For any fixed $ t>0 $, 
we define a probability law $ W^{\uparrow,(t)} $ on $ \cF_t $ as 
\begin{align}
W^{\uparrow,(t)}(\cdot) 
= \frac{\vn^+(\cdot; \ \zeta>t)}{\vn^+(\zeta>t)} 
\label{}
\end{align}
where $ \vn^+ $ stands for 
the {\em excursion measure of the reflecting Brownian motion} 
(see, e.g., \cite[\S III.4.3]{MR1011252} 
and 
\cite[\S \S XII.4]{MR1725357}) 
and $ \zeta $ for the {\em lifetime}. 
The process $ (X_s:s \le t) $ under $ W^{\uparrow,(t)} $ 
is called the {\em Brownian meander}. 
Durrett--Iglehart--Miller \cite[Thm.2.1]{MR0436353} have proved that 
\begin{align}
W^{\uparrow,(t)}[Z_t] 
= \lim_{\eps \to 0+} W_0 \sbra{ Z_t 
\biggm| \forall u \le t , \ X_u \ge -\eps } 
\label{eq: limit of BMt ge -eps}
\end{align}
for any bounded continuous $ \cF_t $-measurable functional $ Z_t $; 
in particular, for any $ \cF_t $-nice functional $ Z_t $. 
We may represent \eqref{eq: limit of BMt ge -eps} symbolically as 
\begin{align}
W^{\uparrow,(t)}(\cdot) 
= W_0 \rbra{ \cdot \biggm| \forall u \le t , \ X_u \ge 0 } ; 
\label{eq: BM cond to stay pos until t}
\end{align}
that is, the Brownian meander 
is the Brownian motion {\em conditioned to stay positive until time $ t $}. 
As another interpretation of \eqref{eq: BM cond to stay pos until t}, 
we have 
\begin{align}
W^{\uparrow,(t)}[Z_t] 
= \lim_{\eps \to 0+} W_0 \sbra{ Z_t \circ \theta_{\eps} 
\biggm| \forall u \le t , \ X_u \circ \theta_{\eps} \ge 0 } 
\label{eq: BM cond to stay pos until t 2}
\end{align}
for any $ \cF_t $-nice functional $ Z_t $. 
The proof of \eqref{eq: BM cond to stay pos until t 2} 
will be given in Theorem \ref{thm: stable cond to stay pos until t} 
in the settings of stable L\'evy processes. 

We write $ W^{\uparrow}_0 $ for the law 
of a {\em three-dimensional Bessel process}, that is, 
the law of the radius 
$ \sqrt{(B^{(1)}_t)^2 + (B^{(2)}_t)^2 + (B^{(3)}_t)^2} $ 
of a three-dimensional Brownian motion $ (B^{(1)}_t,B^{(2)}_t,B^{(3)}_t) $. 
We remark that 
$ W^{\uparrow}_0 $ is locally equivalent to $ \vn^+ $: 
\begin{align}
\d W^{\uparrow}_0|_{\cF_t} = \frac{1}{C^{\uparrow}} X_t \d \vn^+|_{\cF_t} 
\label{}
\end{align}
where $ C^{\uparrow} = \vn^+[X_t] $ is a constant independent of $ t>0 $. 
We say that $ W^{\uparrow}_0 $ is the {\em $ h $-transform 
of the excursion measure $ \vn^+ $ 
with respect to the function $ h(x)=x $}. 
Then it holds (see, e.g., Theorem \ref{thm: stable cond to stay pos}) that 
\begin{align}
W^{\uparrow}_0[Z] 
= \lim_{t \to \infty } W^{\uparrow,(t)}[Z] 
\label{eq: lim of BM as cond to stay pos}
\end{align}
for any $ \cF_t $-nice functional $ Z $ with $ 0<t<\infty $. 
We may represent \eqref{eq: lim of BM as cond to stay pos} symbolically as 
\begin{align}
W^{\uparrow}_0(\cdot) 
= W_0 \rbra{ \cdot \biggm| \forall u , \ X_u \ge 0 } ; 
\label{eq: BM cond to stay pos symb}
\end{align}
that is, $ W^{\uparrow}_0 $ 
is the Brownian motion {\em conditioned to stay positive during the whole time}.

\subsection{Brownian motions conditioned to avoid the origin}

For any fixed $ t>0 $, 
we define a probability law $ W^{\times,(t)} $ on $ \cF_t $ as 
\begin{align}
W^{\times,(t)} = \frac{W^{\uparrow,(t)} + W^{\downarrow,(t)}}{2} 
\label{}
\end{align}
where $ W^{\downarrow,(t)} $ stands for 
the law of the process $ (-X_s:s \le t) $ under $ W^{\uparrow,(t)} $. 
The law $ W^{\times,(t)} $ may be represented as 
\begin{align}
W^{\times,(t)}(\cdot) 
= \frac{\vn(\cdot; \ \zeta>t)}{\vn(\zeta>t)} 
\label{}
\end{align}
where $ \vn $ stands for the {\em excursion measure of the Brownian motion}; 
in fact, $ \vn = \frac{\vn^+ + \vn^-}{2} $ 
where $ \vn^- $ is the image measure of the process $ (-X_t) $ under $ \vn^+ $. 
Immediately from \eqref{eq: BM cond to stay pos until t 2} 
and continuity of paths, we have 
\begin{align}
W^{\times,(t)}[Z_t] 
= \lim_{\eps \to 0+} W_0 \sbra{ Z_t \circ \theta_{\eps} 
\biggm| \forall u \le t , \ X_u \circ \theta_{\eps} \neq 0 } 
\label{eq: limit of BM neq zero}
\end{align}
for any $ \cF_t $-nice functional $ Z_t $. 
We may represent \eqref{eq: limit of BM neq zero} symbolically as 
\begin{align}
W^{\times,(t)}(\cdot) 
= W_0 \rbra{ \cdot \biggm| \forall u \le t , \ X_u \neq 0 } ; 
\label{eq: BM cond to avoid 0 until t}
\end{align}
that is, 
$ W^{\times,(t)} $ is 
the Brownian motion {\em conditioned to avoid the origin until time $ t $}. 

We define 
\begin{align}
W^{\times}_0 = \frac{W^{\uparrow}_0 + W^{\downarrow}_0}{2} 
\label{eq: symmetrized Bessel}
\end{align}
where $ W^{\downarrow}_0 $ stands for 
the law of the process $ (-X_t) $ under $ W^{\uparrow}_0 $. 
In other words, the law $ W^{\uparrow}_0 $ 
is the symmetrization of the three-dimensional Bessel process. 
We also remark that 
$ W^{\times}_0 $ is locally equivalent to $ \vn $: 
\begin{align}
\d W^{\times}_0|_{\cF_t} = \frac{1}{C^{\times}} |X_t| \d \vn|_{\cF_t} 
\label{}
\end{align}
where $ C^{\times} = \vn[|X_t|] $ is a constant independent of $ t>0 $. 
We say that $ W^{\times}_0 $ is the {\em $ h $-transform 
with respect to the function $ h(x)=|x| $}. 
Then it is immediate from \eqref{eq: lim of BM as cond to stay pos} that 
\begin{align}
W^{\times}_0[Z] 
= \lim_{t \to \infty } W^{\times,(t)}[Z] 
\label{eq: limit B meander t to infty}
\end{align}
for any $ \cF_t $-nice functional $ Z $ with $ 0<t<\infty $. 
We may represent \eqref{eq: limit B meander t to infty} symbolically as 
\begin{align}
W^{\times}_0(\cdot) 
= W_0 \rbra{ \cdot \biggm| \forall u , \ X_u \neq 0 } ; 
\label{eq: BM cond to avoid zero}
\end{align}
that is, $ W^{\times}_0 $ is 
the Brownian motion {\em conditioned to avoid the origin during the whole time}.

\section{Stable L\'evy processes conditioned to stay positive}
\label{sec: stay positive}

Let us review the theory of 
strictly stable L\'evy processes conditioned to stay positive. 
For concise references, 
see, e.g., 
\cite{MR1406564} 
and 
\cite{MR2320889}. 
We refer to these textbooks 
also about the theory of conditioning to stay positive 
for {\em spectrally negative L\'evy processes}, 
where we do not go into the details. 

For a Borel set $ F $, we denote the first hitting time of $ F $ by 
\begin{align}
T_F = \inf \{ t>0 : X_t \in F \} . 
\label{}
\end{align}
Define 
\begin{align}
\uX_t = \inf_{s \le t} X_s 
, \quad 
\uR_t = X_t - \uX_t 
\label{}
\end{align}
and call the process $ (\uR_t) $ the {\em reflected process}. 

Let $ (P_x) $ denote the law of a {\em strictly stable L\'evy process 
of index $ 0 < \alpha \le 2 $}, 
that is, a process with c\`adl\`ag paths and with stationary independent increments 
satisfying the following scaling property: 
\begin{align}
(k^{-\frac{1}{\alpha }}X_{kt}:t \ge 0) \law (X_t:t \ge 0) 
\quad \text{under} \ P_0 
\label{eq: scaling}
\end{align}
for any $ k>0 $. 
Note that the Brownian case corresponds to $ \alpha =2 $. 
From the scaling property \eqref{eq: scaling}, it is immediate that the quantity 
\begin{align}
\rho := P_0(X_t \ge 0) 
\label{}
\end{align}
does not depend on $ t>0 $, which is called the {\em positivity parameter}. 
The possible values of $ \rho $ range over 
$ [0,1] $ if $ 0 < \alpha < 1 $, 
$ (0,1) $ if $ \alpha =1 $, 
and $ [1-\frac{1}{\alpha },\frac{1}{\alpha }] $ if $ 1 < \alpha \le 2 $. 
Let $ (Q_x:x>0) $ denote the law of the process killed at $ T_{(-\infty ,0)} $: 
\begin{align}
Q_x(\Lambda_t ; \ t<\zeta ) = P_x \rbra{ \Lambda_t ; \ t<T_{(-\infty ,0)} } 
, \quad x>0 , \ \Lambda_t \in \cF_t . 
\label{}
\end{align}
Note that the function 
\begin{align}
(x,t) \mapsto Q_x(t<\zeta ) = P_x \rbra{ t<T_{(-\infty ,0)} } 
\label{eq: jt conti Q}
\end{align}
is jointly continuous in $ x>0 $ and $ t>0 $. 

Let us {\bf exclude} the case where $ |X| $ is a subordinator, i.e., 
\begin{align}
0 < \alpha < 1 \quad \text{and} \quad \rho = 0,1 . 
\label{}
\end{align}
Then 
the reflected process $ (\uR_t) $ under $ (P_x) $ is a Feller process 
where the origin is regular for itself, 
and hence 
there exists the continuous local time process $ (\uL_t) $ at level 0 
of the reflected process $ (\uR_t) $ 
such that 
\begin{align}
P_0 \sbra{ \int_0^{\infty } \e^{-qt} \d \uL_t } 
= q^{\rho-1} 
, \quad q>0 . 
\label{}
\end{align}
Let $ \vn^{\uparrow} $ denote the corresponding excursion measure away from 0 
of the reflected process $ (\uR_t) $. 
The Markov property of $ \vn^{\uparrow} $ may be expressed as 
\begin{align}
\vn^{\uparrow} \rbra{ 1_{\Lambda} \circ \theta_t ; \ \Lambda_t , \ t<\zeta } 
= \vn^{\uparrow} \sbra{ Q_{X_t}(\Lambda) ; \ \Lambda_t , \ t < \zeta } 
\label{}
\end{align}
for any $ \Lambda \in \cF_{\infty } $ and $ \Lambda_t \in \cF_t $. 
We introduce the following function: 
\begin{align}
h^{\uparrow}(x) 
= P_0 \sbra{ \int_0^{\infty } 
1_{ \displaystyle \cbra{ \uX_t \ge -x } } \d \uL_t } 
, \quad x \ge 0 . 
\label{}
\end{align}
Since the {\em ladder height process} $ H := \uX \circ \uL^{-1} $ 
is a stable L\'evy process of index $ \alpha (1-\rho) $, we see that 
\begin{align}
h^{\uparrow}(x) 
= P_0 \sbra{ \int_0^{\infty } 
1_{ \displaystyle \cbra{ H_u \ge -x } } \d u } 
= C^{\uparrow}_1 x^{\alpha (1-\rho)} 
\label{}
\end{align}
for some constant $ C^{\uparrow}_1>0 $ independent of $ x \ge 0 $. 
The following theorem is due to 
Silverstein \cite[Thm.2]{MR573292}; 
another proof can be found in Chaumont--Doney \cite[Lem.1]{MR2164035} 
(see also Doney \cite[Lem.10 in \S 8.3]{MR2320889}). 

\begin{Thm}[\cite{MR573292}]
It holds that 
\begin{align}
Q_x[(X_t)^{\alpha (1-\rho)} ; \ t<\zeta] 
=& x^{\alpha (1-\rho)} 
, \quad x>0, \ t>0, 
\\
\vn^{\uparrow}[(X_t)^{\alpha (1-\rho)} ; \ t<\zeta] 
=& C^{\uparrow}_2 
, \quad t>0 
\label{}
\end{align}
for some constant $ C^{\uparrow}_2 $ independent of $ t>0 $. 
\end{Thm}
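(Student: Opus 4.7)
The plan is to prove the two identities in sequence. The first (pointwise invariance under $Q_x$) is the substantive statement; the second follows from it by the Markov property of $\vn^{\uparrow}$.

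For the first identity, set $h(x) := x^{\alpha(1-\rho)}$, so that the claim reads $Q_x[h(X_t); t<\zeta] = h(x)$. Up to the constant $C_1^{\uparrow}$ identified just before the theorem, $h$ coincides with the renewal function
\[
h^{\uparrow}(x) = P_0\left[\int_0^{\infty} 1_{\{\uX_s \ge -x\}} \d \uL_s\right]
\]
of the descending ladder height process $-H$, and the invariance is the classical assertion that this renewal function is harmonic for the semigroup of $X$ killed upon first entering $(-\infty, 0)$. I would invoke Silverstein's theorem \cite{MR573292} directly, or alternatively the proof of Chaumont--Doney \cite{MR2164035}; the latter proceeds by a time-reversal/duality argument, identifying the $Q_x$-expectation with a potential-theoretic quantity for the dual ladder process and evaluating this quantity as $h^{\uparrow}(x)$. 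Dividing through by $C_1^{\uparrow}$ then yields the first identity of the theorem.

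The second identity follows at once. For $0<s<t$, the Markov property of $\vn^{\uparrow}$ displayed in the text gives
\begin{align*}
\vn^{\uparrow}[(X_t)^{\alpha(1-\rho)}; t<\zeta]
&= \vn^{\uparrow}\!\left[Q_{X_s}\!\left[(X_{t-s})^{\alpha(1-\rho)}; t-s<\zeta\right]; s<\zeta\right] \\
&= \vn^{\uparrow}[(X_s)^{\alpha(1-\rho)}; s<\zeta],
\end{align*}
the second line by the first identity applied inside the outer integral. Hence the quantity is independent of $t$, and we define $C_2^{\uparrow}$ to be its common value. Finiteness of $C_2^{\uparrow}$ is routine, for instance from the integrability of $h^{\uparrow}(X_t)$ under the entrance law of $\vn^{\uparrow}$ at any fixed $t>0$.

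The main obstacle is Step 1. Although the renewal-function interpretation together with the scaling property makes the identity look natural, a self-contained Markov-property proof is delicate because one must carefully decompose the local time $\uL$ and the running infimum $\uX$ across the cutoff time $t$, in particular handling the event that $X$ crosses the level $-x$ by a downward jump. For this reason the plan is to cite the existing duality-based proofs rather than reproduce the full computation.
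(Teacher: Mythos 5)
Your proposal is correct and takes essentially the same route as the paper, which in fact offers no proof at all beyond the very citations you invoke (Silverstein \cite{MR573292}, with the alternative duality argument of Chaumont--Doney \cite{MR2164035}). Your additional derivation of the second identity from the first via the displayed Markov property of $\vn^{\uparrow}$ is the standard and correct way to obtain the constancy in $t$.
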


By virtue of this theorem, we may define the $ h $-transform by 
\begin{align}
\d P^{\uparrow}_x|_{\cF_t} = 
\begin{cases}
\rbra{ \frac{X_t}{x} }^{\alpha (1-\rho)} \d Q_x|_{\cF_t} 
& \text{if} \ x>0 , \\
\frac{1}{C^{\uparrow}_2} (X_t)^{\alpha (1-\rho)} \d \vn^{\uparrow}|_{\cF_t} 
& \text{if} \ x=0 ; 
\end{cases}
\label{eq: def of P uparrow}
\end{align}
indeed, the family $ (P^{\uparrow}_x|_{\cF_t}:t \ge 0) $ 
is proved to be consistent by the Markov property of $ \vn^{\uparrow} $. 

\begin{Thm}[\cite{MR1419491}] \label{thm: Feller uparrow}
The process $ ((X_t),(P^{\uparrow}_x)) $ is a Feller process. 
\end{Thm}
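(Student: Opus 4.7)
The plan is to verify the two defining properties of a Feller semigroup for the family $(P^{\uparrow}_t)$ associated with $(P^{\uparrow}_x)$: that $P^{\uparrow}_t$ maps $C_0([0,\infty))$ into itself, and that $\|P^{\uparrow}_t f - f\|_\infty \to 0$ as $t \to 0+$ for every $f\in C_0([0,\infty))$. Writing $\beta := \alpha(1-\rho)$, the semigroup is given explicitly by
\begin{align*}
P^{\uparrow}_t f(x) &= \frac{1}{x^{\beta}} Q_x\bigl[ X_t^{\beta} f(X_t); \ t<\zeta \bigr], \quad x>0,\\
P^{\uparrow}_t f(0) &= \frac{1}{C^{\uparrow}_2} \vn^{\uparrow}\bigl[ X_t^{\beta} f(X_t); \ t<\zeta \bigr].
\end{align*}
The semigroup (consistency) property is already granted through the Markov property of $\vn^{\uparrow}$, so the work is in spatial regularity and strong continuity at $t=0$.

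First I would handle the behaviour on $(0,\infty)$. Continuity of $x\mapsto P^{\uparrow}_t f(x)$ on $(0,\infty)$ should follow from the joint continuity of $(x,t)\mapsto Q_x(t<\zeta)$ recorded in \eqref{eq: jt conti Q}, combined with the c\`adl\`ag Feller structure of the killed process $Q_x$ and dominated convergence, using that $X_t^{\beta} Q_x$-integrability is provided by the previous theorem. For the decay at infinity, I would use scaling: since $P^{\uparrow}_t f(x) = P_x[(X_t/x)^{\beta} f(X_t);\ t<T_{(-\infty,0)}]$, and $f\in C_0$ is bounded with $f(y)\to 0$ as $y\to\infty$ (and vanishes near $0$), the scaling property \eqref{eq: scaling} rescales the integrand to $E_1[(Y_{t/x^\alpha})^{\beta} f(xY_{t/x^\alpha});\ t/x^\alpha < T_{(-\infty,0)}(Y)]$, and a dominated convergence argument yields $P^{\uparrow}_t f(x)\to 0$ as $x\to\infty$.

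The main obstacle is continuity at the boundary $x=0$, because the formula for $P^{\uparrow}_0$ uses $\vn^{\uparrow}$ rather than a point mass, so it is of a different nature from the formula for $x>0$. Here I would exploit the scaling invariance: for each $k>0$, the law of $(k^{-1/\alpha}X_{kt})$ under $P^{\uparrow}_x$ is $P^{\uparrow}_{k^{-1/\alpha}x}$. Taking $k=x^{\alpha}$ reduces the study of $P^{\uparrow}_x$ as $x\downarrow 0$ to the study of large-time rescalings of $P^{\uparrow}_1$. Thus identifying $\lim_{x\downarrow 0}P^{\uparrow}_x=P^{\uparrow}_0$ amounts to a weak-convergence / tightness statement for the $h$-transformed family, which can be obtained by combining (a) convergence of finite-dimensional distributions, deduced from the explicit density together with the continuity at $x=0$ of $x\mapsto \vn^{\uparrow}$-mass quantities (via the entrance law of $\vn^{\uparrow}$), and (b) tightness on Skorohod space, which follows from the Feller property on $(0,\infty)$ together with uniform control as $x\downarrow 0$ coming from the scaling described above.

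Finally, strong continuity at $t=0$: for $x>0$ this follows from the right-continuity of paths under $Q_x$ and dominated convergence using $X_t^{\beta}$ as the integrand factor; for $x=0$, one uses that the entrance law of $\vn^{\uparrow}$ concentrates near $0$ as $t\downarrow 0$, so $P^{\uparrow}_t f(0)\to f(0)$, and the convergence is uniform thanks to the Feller property already established on $(0,\infty)$ plus control at infinity. The uniformity over $x$ is the delicate step and uses the continuity at $0$ proved above; I expect the boundary-behavior analysis at $x=0$ to be the hardest part of the argument, mirroring the corresponding analysis for Bessel processes but made more subtle by the presence of jumps.
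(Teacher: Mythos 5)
The paper does not prove this theorem at all: it quotes it from Chaumont \cite{MR1419491}, and explicitly notes that the substance of the result is the weak convergence $P^{\uparrow}_x \to P^{\uparrow}_0$ as $x \to 0+$ (with alternative treatments by Bertoin--Yor via positive self-similar Markov processes and by Tanaka via his construction). Your outline has the right skeleton --- checking $C_0$-preservation and strong continuity, handling $x>0$ by the Feller structure of the killed process, and isolating the boundary point $x=0$ as the crux --- and the self-similarity reduction is correct: since the $h$-function $x^{\beta}$ is homogeneous, $P^{\uparrow}_x$ is the law of $(xX_{x^{-\alpha}t})$ under $P^{\uparrow}_1$, so continuity at $0$ is equivalent to a scaling limit of $P^{\uparrow}_1$.

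The genuine gap is that this reduction reformulates the problem without solving it. You assert that convergence of finite-dimensional distributions follows ``from the explicit density together with the continuity at $x=0$ of $x\mapsto \vn^{\uparrow}$-mass quantities (via the entrance law of $\vn^{\uparrow}$),'' but that is precisely the statement to be proved: one must show that the normalized kernels $x^{-\beta}y^{\beta}\,Q_x(X_t\in \d y)$ converge as $x\to 0+$ to $(C^{\uparrow}_2)^{-1}y^{\beta}\,\vn^{\uparrow}(X_t\in \d y)$, i.e.\ that the entrance law of the $h$-process launched from $x>0$ is asymptotically the $h$-transform of the entrance law of the excursion measure. There is no soft continuity or dominated-convergence argument that yields this identification; in the literature it requires a real input such as Chaumont's path decomposition at the minimum and time reversal, the Lamperti representation together with the law of the exponential functional (Bertoin--Yor), or Tanaka's explicit construction. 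Your proposal names none of these mechanisms, so the hardest step remains unproved. (Two smaller points: functions in $C_0([0,\infty))$ need not vanish near $0$, only at infinity, so the decay-at-infinity argument should not assume $f$ vanishes near the origin; and weak convergence on Skorokhod space must still be upgraded to convergence of $P^{\uparrow}_x[f(X_t)]$ at every fixed $t$, which uses that fixed times are a.s.\ continuity points.)
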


This theorem is due to 
Chaumont \cite[Thm.6]{MR1419491}, 
where he proved weak convergence $ P^{\uparrow}_x \to P^{\uparrow}_0 $ as $ x \to 0+ $ 
in the c\`adl\`ag space equipped with Skorokhod topology. 
Bertoin--Yor \cite[Thm.1]{MR1918243} 
proved the weak convergence for general positive self-similar Markov processes. 
Tanaka \cite[Thm.4]{MR2073341} 
proved the Feller property for quite a general class of L\'evy processes. 

Now let us discuss the conditionings 
\eqref{eq: BM cond to stay pos until t} 
and \eqref{eq: BM cond to stay pos symb} 
for stable L\'evy processes. 
The following theorem is an immediate consequence of Chaumont \cite[Lem.1]{MR1465814} 
and of the continuity of \eqref{eq: jt conti Q}. 

\begin{Thm}[\cite{MR1465814}] \label{thm: phi func uparrow}
Let $ t>0 $ be fixed. 
Then the function 
\begin{align}
[0,\infty ) \ni x \mapsto P^{\uparrow}_x \sbra{ (X_t)^{-\alpha (1-\rho)} } 
\label{}
\end{align}
is continuous and vanishes at infinity. 
\end{Thm}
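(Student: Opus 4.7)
The plan is to peel back the $h$-transform \eqref{eq: def of P uparrow} until the claim reduces to a statement about the killed-probability function $x \mapsto P_x(t<T_{(-\infty,0)})$ appearing in \eqref{eq: jt conti Q}. For $x>0$, inserting the Radon--Nikodym density and observing the exact cancellation of the $h$-factor gives
\begin{align*}
P^{\uparrow}_x \sbra{ (X_t)^{-\alpha(1-\rho)} }
&= x^{-\alpha(1-\rho)} Q_x \sbra{ (X_t)^{-\alpha(1-\rho)} (X_t)^{\alpha(1-\rho)} ; \ t<\zeta } \\
&= x^{-\alpha(1-\rho)} P_x(t<T_{(-\infty,0)}) ,
\end{align*}
while at $x=0$ the analogous computation collapses to the constant $P^{\uparrow}_0[(X_t)^{-\alpha(1-\rho)}] = \vn^{\uparrow}(t<\zeta)/C^{\uparrow}_2$. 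Thus the function in question is, apart from its isolated value at $x=0$, simply $x^{-\alpha(1-\rho)} P_x(t<T_{(-\infty,0)})$.

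Continuity on $(0,\infty)$ is then immediate from the joint continuity of $(x,t) \mapsto P_x(t<T_{(-\infty,0)})$ recorded in \eqref{eq: jt conti Q} combined with the continuity of $x \mapsto x^{-\alpha(1-\rho)}$ on $(0,\infty)$. Vanishing at infinity is equally routine: since the subordinator case has been excluded, $\rho<1$ in every admissible regime, so $\alpha(1-\rho)>0$, and the trivial bound $P_x(t<T_{(-\infty,0)})\le 1$ forces the ratio to tend to $0$ as $x\to\infty$.

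The genuine obstacle is continuity at $x=0+$; one must verify
\begin{align*}
\lim_{x\to 0+} x^{-\alpha(1-\rho)} P_x(t<T_{(-\infty,0)}) = \frac{\vn^{\uparrow}(t<\zeta)}{C^{\uparrow}_2} .
\end{align*}
The existence of this limit, together with the identification of its value, is precisely the content of Chaumont \cite[Lem.1]{MR1465814}. Morally it is a regular-variation statement for $P_x(t<T_{(-\infty,0)})$ in $x$ near $0$ with exponent $\alpha(1-\rho)$, which the scaling \eqref{eq: scaling} reduces to the tail behaviour of the first passage time $T_{(-\infty,-1)}$ under $P_0$; the limiting constant is then pinned down by consistency of the definition of $P^{\uparrow}_0$ in \eqref{eq: def of P uparrow}. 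Without importing this external input, one is left only with continuity on $(0,\infty)$, which is why this boundary step is the crux of the proof.
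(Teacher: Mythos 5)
Your proposal is correct and follows exactly the route the paper intends: the paper proves this theorem by declaring it an immediate consequence of Chaumont \cite[Lem.1]{MR1465814} together with the continuity of \eqref{eq: jt conti Q}, and your argument simply unpacks that---cancelling the $h$-transform density to reduce the function to $x^{-\alpha(1-\rho)}P_x(t<T_{(-\infty,0)})$ for $x>0$, handling continuity on $(0,\infty)$ and decay at infinity by \eqref{eq: jt conti Q} and the bound $P_x(t<T_{(-\infty,0)})\le 1$ with $\alpha(1-\rho)>0$, and correctly isolating the continuity at $x=0+$ as the one genuine point, which is precisely what Chaumont's lemma supplies.
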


Define a probability law $ M^{\uparrow,(t)} $ on $ \cF_t $ as 
\begin{align}
M^{\uparrow,(t)}(\Lambda_t) 
= \frac{\vn^{\uparrow}(\Lambda_t ; \ \zeta>t) }{\vn^{\uparrow}(\zeta>t)} 
= \frac{P^{\uparrow}_0[(X_t)^{-\alpha (1-\rho)}; \ \Lambda_t ]}
{P^{\uparrow}_0[(X_t)^{-\alpha (1-\rho)}]} 
\label{eq: def of meander uparrow}
\end{align}
for $ \Lambda_t \in \cF_t $. 
The following theorem, 
which generalizes \eqref{eq: limit of BMt ge -eps}, 
can be found in 
Bertoin \cite[Thm.VIII.18]{MR1406564}. 

\begin{Thm}[\cite{MR1406564}]
For any $ t>0 $, it holds that 
\begin{align}
M^{\uparrow,(t)}[Z_t] = \lim_{\eps \to 0+} P_0 \sbra{ Z_t 
\biggm| \forall u \le t , \ X_u \ge -\eps } 
\label{}
\end{align}
for any $ \cF_t $-nice functional $ Z_t $. 
\end{Thm}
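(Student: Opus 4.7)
The plan is to rewrite the conditional expectation as a ratio of $P^\uparrow_\eps$-expectations, then let $\eps \to 0+$ using Theorems \ref{thm: Feller uparrow} and \ref{thm: phi func uparrow} together with the Markov property of $P^\uparrow$. Write $Z_t = f(X_{t_1},\ldots,X_{t_n})$ with $0<t_1<\cdots<t_n<t$. By spatial homogeneity of the stable L\'evy process, the shift $Y_u = X_u + \eps$ converts $P_0$ into $P_\eps$ and the event $\{\forall u \le t,\ X_u \ge -\eps\}$ into $\{t < T_{(-\infty,0)}\}$, so
\begin{align*}
P_0 \sbra{ Z_t ;\ \forall u \le t,\ X_u \ge -\eps }
= Q_\eps \sbra{ Z_t^{(\eps)} ;\ t<\zeta } ,
\end{align*}
where $Z_t^{(\eps)} := f(X_{t_1}-\eps,\ldots,X_{t_n}-\eps)$; the same identity with $Z_t \equiv 1$ handles the denominator. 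Inverting the $h$-transform \eqref{eq: def of P uparrow} yields $Q_\eps[F;\ t<\zeta] = \eps^{\alpha(1-\rho)} P^\uparrow_\eps[F\,X_t^{-\alpha(1-\rho)}]$ for any $\cF_t$-measurable $F$, and the prefactors $\eps^{\alpha(1-\rho)}$ cancel in the ratio, giving
\begin{align*}
P_0 \sbra{ Z_t \biggm| \forall u \le t,\ X_u \ge -\eps }
= \frac{P^\uparrow_\eps \sbra{ Z_t^{(\eps)} X_t^{-\alpha(1-\rho)} }}{P^\uparrow_\eps \sbra{ X_t^{-\alpha(1-\rho)} }} .
\end{align*}
The target \eqref{eq: def of meander uparrow} is exactly the same ratio at $\eps=0$.

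Next I would pass to the limit. The denominator converges to $P^\uparrow_0[X_t^{-\alpha(1-\rho)}]$ directly from the continuity asserted in Theorem \ref{thm: phi func uparrow}. For the numerator, I would apply the Markov property of $P^\uparrow$ at time $t_n$: since $Z_t^{(\eps)}$ is $\cF_{t_n}$-measurable,
\begin{align*}
P^\uparrow_\eps \sbra{ Z_t^{(\eps)} X_t^{-\alpha(1-\rho)} }
= P^\uparrow_\eps \sbra{ Z_t^{(\eps)} \phi(X_{t_n}) } ,
\quad
\phi(y) := P^\uparrow_y \sbra{ X_{t-t_n}^{-\alpha(1-\rho)} } ,
\end{align*}
and a second application of Theorem \ref{thm: phi func uparrow} shows $\phi$ to be bounded continuous on $[0,\infty)$. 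The path functional $\omega \mapsto f(\omega(t_1),\ldots,\omega(t_n))\phi(\omega(t_n))$ is then bounded and, at $P^\uparrow_0$-a.s.\ continuity points of $\omega$ at $t_1,\ldots,t_n$, continuous on the Skorokhod space, so the Feller property (Theorem \ref{thm: Feller uparrow}) gives $P^\uparrow_\eps[Z_t \phi(X_{t_n})] \to P^\uparrow_0[Z_t \phi(X_{t_n})]$. Uniform continuity of $f$ controls the additional $\eps$-translation via $\|Z_t^{(\eps)} - Z_t\|_\infty \to 0$, and the triangle inequality delivers convergence of the numerator to $P^\uparrow_0[Z_t X_t^{-\alpha(1-\rho)}]$.

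The main obstacle is that the weight $X_t^{-\alpha(1-\rho)}$ is unbounded near zero, so the weak convergence $P^\uparrow_\eps \to P^\uparrow_0$ in Skorokhod topology cannot be applied to $Z_t^{(\eps)} X_t^{-\alpha(1-\rho)}$ directly. The device that circumvents this is the Markov factorization above, which absorbs the singular weight into the bounded continuous function $\phi$; after that, the Feller property of $P^\uparrow$ is enough to conclude.
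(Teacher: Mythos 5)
Your argument is correct, and it is worth noting that the paper itself gives no proof of this particular statement: it is quoted from Bertoin \cite[Thm.VIII.18]{MR1406564}, and the only proof actually written out in the paper is that of the neighbouring Theorem \ref{thm: stable cond to stay pos until t}, where the perturbation is a time shift $\theta_{\eps}$ rather than a spatial level $-\eps$. Your route is the natural spatial analogue of that proof: where the paper applies the Markov property at time $\eps$ and then the scaling property to reduce to $P^{\uparrow}_{\eps^{1/\alpha}X_1}$, you apply spatial homogeneity to replace $P_0(\,\cdot\,;\forall u\le t,\ X_u\ge-\eps)$ by $Q_{\eps}(\,\cdot\,;t<\zeta)$, invert the $h$-transform \eqref{eq: def of P uparrow} so that the factors $\eps^{\alpha(1-\rho)}$ cancel in the ratio, and then send $\eps\to0+$ using Theorems \ref{thm: Feller uparrow} and \ref{thm: phi func uparrow}, exactly the two ingredients the paper invokes. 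Your treatment of the limit is in fact more careful than the paper's: the weight $X_t^{-\alpha(1-\rho)}$ is unbounded, so the weak convergence $P^{\uparrow}_{\eps}\to P^{\uparrow}_0$ cannot be applied to it directly, and your device of conditioning on $\cF_{t_n}$ to absorb the singular weight into the bounded continuous function $\phi(y)=P^{\uparrow}_y\sbra{X_{t-t_n}^{-\alpha(1-\rho)}}$ makes explicit a step that the paper's own proof of Theorem \ref{thm: stable cond to stay pos until t} passes over with the phrase ``by the dominated convergence theorem.'' The only points you should spell out are routine: the identification of $\{\forall u\le t,\ X_u\ge0\}$ with $\{t<T_{(-\infty,0)}\}$ up to a $P_{\eps}$-null set, the absence of fixed jump times under $P^{\uparrow}_0$ (needed for a.s.\ continuity of $\omega\mapsto f(\omega(t_1),\dots,\omega(t_n))\phi(\omega(t_n))$ in the Skorokhod topology), and the uniform continuity of $f$, which follows since $f$ is continuous and vanishes at infinity.
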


Now we give the following version 
of \eqref{eq: BM cond to stay pos until t 2} 
for stable L\'evy processes. 

\begin{Thm} \label{thm: stable cond to stay pos until t}
For any $ t>0 $, it holds that 
\begin{align}
M^{\uparrow,(t)}[Z_t] 
= \lim_{\eps \to 0+} P_0 \sbra{ Z_t \circ \theta_{\eps} 
\biggm| \forall u \le t , \ X_u \circ \theta_{\eps} \ge 0 } 
\label{eq: stable cond to stay pos until t}
\end{align}
for any $ \cF_t $-nice functional $ Z_t $. 
\end{Thm}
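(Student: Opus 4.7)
The plan is to apply the Markov property of $(P_x)$ at time $\eps$, rewrite the killed law $Q_y$ via the $h$-transform \eqref{eq: def of P uparrow} as a $P^{\uparrow}_y$-expectation, use the scaling property \eqref{eq: scaling} to isolate the $\eps$-dependence, and then pass to the limit by dominated convergence using the Feller property (Theorem \ref{thm: Feller uparrow}) and Theorem \ref{thm: phi func uparrow}.

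Since the conditioning event forces $X_\eps\ge 0$ (so, up to a $P_0$-null set, $X_\eps>0$) and there coincides with $\{T_{(-\infty,0)}\circ\theta_\eps>t\}$, the Markov property at time $\eps$ gives
\begin{align}
P_0\sbra{ Z_t\circ\theta_\eps;\ \forall u\le t,\ X_u\circ\theta_\eps\ge 0 }
= \int_{(0,\infty)} P_0(X_\eps\in\d y)\, Q_y[Z_t;\, t<\zeta].
\end{align}
Set $\phi_Z(y):=P^{\uparrow}_y\bigl[Z_t\,(X_t)^{-\alpha(1-\rho)}\bigr]$ for $y\ge 0$, so that by \eqref{eq: def of P uparrow}, $Q_y[Z_t;\,t<\zeta]=y^{\alpha(1-\rho)}\phi_Z(y)$ when $y>0$. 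By scaling, $X_\eps$ under $P_0$ has the law of $\eps^{1/\alpha}X_1$; the substitution $y=\eps^{1/\alpha}z$ yields
\begin{align}
P_0\sbra{ Z_t\circ\theta_\eps;\ \forall u\le t,\ X_u\circ\theta_\eps\ge 0 }
= \eps^{1-\rho}\int_{(0,\infty)} z^{\alpha(1-\rho)}\,\phi_Z(\eps^{1/\alpha}z)\,P_0(X_1\in\d z).
\end{align}
The same identity with $Z_t\equiv 1$ governs the denominator of the conditional expectation, and the factor $\eps^{1-\rho}$ cancels in the ratio.

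To take the limit, I would first show that $\phi_Z(y)\to\phi_Z(0)$ as $y\to 0+$. Writing $Z_t=f(X_{t_1},\ldots,X_{t_n})$ with $t_n<t$ and conditioning on $X_{t_n}$ under $P^{\uparrow}_y$, the Markov property gives
\begin{align}
\phi_Z(y)=P^{\uparrow}_y\bigl[ f(X_{t_1},\ldots,X_{t_n})\,\psi(X_{t_n}) \bigr],
\quad \psi(x):=P^{\uparrow}_x\bigl[(X_{t-t_n})^{-\alpha(1-\rho)}\bigr];
\end{align}
by Theorem \ref{thm: phi func uparrow}, $\psi$ is bounded continuous on $[0,\infty)$, so the integrand is a bounded continuous function of the finite marginals $(X_{t_1},\ldots,X_{t_n})$, and the Feller property (Theorem \ref{thm: Feller uparrow}) yields $\phi_Z(y)\to\phi_Z(0)$. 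Dominated convergence then applies: $|\phi_Z(y)|\le\|f\|_\infty\,\phi_0(y)\le\|f\|_\infty\,\|\phi_0\|_\infty$ uniformly (again by Theorem \ref{thm: phi func uparrow}, with $\phi_0$ denoting $\phi_Z$ in the case $Z_t\equiv 1$), and $z\mapsto z^{\alpha(1-\rho)}$ is $P_0(X_1\in\d z)$-integrable on $(0,\infty)$ because the stable tail satisfies $P_0(X_1>z)=O(z^{-\alpha})$ at infinity and $\rho>0$ under our exclusion hypothesis. Taking $\eps\to 0+$, the ratio converges to $\phi_Z(0)/\phi_0(0)=M^{\uparrow,(t)}[Z_t]$ by \eqref{eq: def of meander uparrow}, completing the proof.

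The main obstacle is the continuity of $\phi_Z$ at the boundary point $y=0$: because $(X_t)^{-\alpha(1-\rho)}$ is unbounded near $X_t=0$, the Feller property cannot be invoked directly on $Z_t(X_t)^{-\alpha(1-\rho)}$. The device of using the Markov property at the final marker $t_n$ to absorb the singular factor into a bounded continuous function $\psi(X_{t_n})$, with boundedness supplied by Theorem \ref{thm: phi func uparrow} applied at time $t-t_n$, is the technical heart of the argument and is the precise analogue of the corresponding step for Brownian motion.
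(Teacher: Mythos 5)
Your proof follows essentially the same route as the paper's: Markov property at time $\eps$, rewriting $Q_y$ via the $h$-transform \eqref{eq: def of P uparrow}, the scaling substitution $X_\eps \law \eps^{1/\alpha}X_1$, and passage to the limit via Theorems \ref{thm: Feller uparrow} and \ref{thm: phi func uparrow} with dominated convergence. You additionally spell out the two points the paper leaves implicit --- absorbing the singular factor $(X_t)^{-\alpha(1-\rho)}$ into a bounded continuous function of $X_{t_n}$ so the Feller property applies, and the integrability of $z^{\alpha(1-\rho)}$ against $P_0(X_1\in\d z)$ --- both of which are correct and are exactly how the cited theorems are meant to be used.
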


\begin{proof}
By the Markov property, 
the expectation of the right hand side of \eqref{eq: stable cond to stay pos until t} 
is equal to 
\begin{align}
\frac{P_0 \sbra{ P_{X_{\eps}} \sbra{ Z_t; \ \forall u \le t , \ X_u \ge 0 } } }
{P_0 \sbra{ P_{X_{\eps}} \rbra{ \forall u \le t , \ X_u \ge 0 } } } 
= 
\frac{P_0 \sbra{ (X_{\eps})^{-\gamma} P^{\uparrow}_{X_{\eps}} \sbra{ Z_t (X_t)^{-\gamma} } } }
{P_0 \sbra{ (X_{\eps})^{-\gamma} P^{\uparrow}_{X_{\eps}} \sbra{ (X_t)^{-\gamma} } } } , 
\label{}
\end{align}
where we put $ \gamma = \alpha (1-\rho) $. 
By the scaling property, this is equal to 
\begin{align}
\frac{P_0 \sbra{ (X_1)^{-\gamma} P^{\uparrow}_{\eps^{1/\alpha } X_1} \sbra{ Z_t (X_t)^{-\gamma} } } }
{P_0 \sbra{ (X_1)^{-\gamma} P^{\uparrow}_{\eps^{1/\alpha } X_1} \sbra{ (X_t)^{-\gamma} } } } . 
\label{}
\end{align}
By Theorems \ref{thm: Feller uparrow} and \ref{thm: phi func uparrow} 
and by the dominated convergence theorem, we see that 
this quantity converges as $ \eps \to 0+ $ to 
\begin{align}
\frac{P^{\uparrow}_0 \sbra{ Z_t (X_t)^{-\gamma} } }
{P^{\uparrow}_0 \sbra{ (X_t)^{-\gamma} } } , 
\label{}
\end{align}
which coincides with $ M^{\uparrow,(t)}[Z_t] $ 
by the definition \eqref{eq: def of meander uparrow}. 
\end{proof}

The following theorem 
generalizes \eqref{eq: lim of BM as cond to stay pos}. 

\begin{Thm} \label{thm: stable cond to stay pos}
It holds that 
\begin{align}
P^{\uparrow}_0[Z] 
= \lim_{t \to \infty } M^{\uparrow,(t)}[Z] 
\label{}
\end{align}
for any $ \cF_t $-nice functional $ Z $ with $ 0<t<\infty $. 
\end{Thm}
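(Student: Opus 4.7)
The plan is to compute the large-$t$ asymptotics of the numerator and denominator of \eqref{eq: def of meander uparrow} separately. Write $\gamma := \alpha(1-\rho)$ and fix $s>0$ so that $Z$ is $\cF_s$-nice (in particular bounded). For $t>s$, the Markov property of $P^{\uparrow}$ applied at time $s$ rewrites the numerator as $P^{\uparrow}_0\sbra{ Z \cdot P^{\uparrow}_{X_s}\sbra{(X_{t-s})^{-\gamma}} }$, thereby reducing the problem to understanding the one-variable function $y \mapsto P^{\uparrow}_y\sbra{(X_r)^{-\gamma}}$ for large $r$ and small $y$.

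The next step is to invoke the scaling inherited by the positive self-similar Markov process $((X_t),(P^{\uparrow}_x))$ of index $1/\alpha$, which follows from the scaling \eqref{eq: scaling} of $Q$ together with the homogeneity of the harmonic function $x \mapsto x^{\gamma}$ used in \eqref{eq: def of P uparrow}. Setting $\varphi(y) := P^{\uparrow}_y\sbra{(X_1)^{-\gamma}}$, this scaling produces, for every $r>0$ and $y \ge 0$,
\[
P^{\uparrow}_y\sbra{(X_r)^{-\gamma}} = r^{-\gamma/\alpha}\, \varphi\rbra{r^{-1/\alpha} y}.
\]
In particular the denominator of \eqref{eq: def of meander uparrow} equals $t^{-\gamma/\alpha} \varphi(0)$, and combining with the Markov-property rewriting one obtains
\[
M^{\uparrow,(t)}[Z]
= \rbra{\frac{t}{t-s}}^{\gamma/\alpha} \frac{P^{\uparrow}_0\sbra{ Z \cdot \varphi\rbra{(t-s)^{-1/\alpha} X_s} }}{\varphi(0)}.
\]

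Finally I would pass to the limit $t \to \infty$. The prefactor $(t/(t-s))^{\gamma/\alpha}$ tends to $1$. Inside the expectation, $(t-s)^{-1/\alpha} X_s \to 0$ pointwise, so continuity of $\varphi$ at the boundary (part of Theorem \ref{thm: phi func uparrow}) yields $\varphi\rbra{(t-s)^{-1/\alpha} X_s} \to \varphi(0)$ pointwise; the same theorem, via the vanishing of $\varphi$ at infinity, makes $\varphi$ uniformly bounded, and $Z$ is bounded as well, so the dominated convergence theorem delivers $P^{\uparrow}_0\sbra{Z \cdot \varphi\rbra{(t-s)^{-1/\alpha} X_s}} \to \varphi(0)\,P^{\uparrow}_0[Z]$. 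Dividing yields $P^{\uparrow}_0[Z]$, as desired.

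The only delicate ingredient is Theorem \ref{thm: phi func uparrow}, which simultaneously packages the boundary continuity of $\varphi$ at $y=0$ and its boundedness; once it is in hand, the remaining steps are scaling bookkeeping and routine dominated convergence, so I expect no further obstacle.
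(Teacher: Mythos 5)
Your proposal is correct and follows essentially the same route as the paper, which simply states that the proof ``can be done in the same way as Theorem \ref{thm: stable cond to stay pos until t}'': namely, apply the Markov property at the fixed time $s$, use the self-similarity of $(P^{\uparrow}_x)$ inherited from \eqref{eq: scaling} and the homogeneity of $x\mapsto x^{\gamma}$, and conclude by the continuity and boundedness of $y\mapsto P^{\uparrow}_y[(X_1)^{-\gamma}]$ from Theorem \ref{thm: phi func uparrow} together with dominated convergence. Your writeup is a faithful (and slightly more explicit) rendering of exactly that argument.
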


The proof can be done 
in the same way as Theorem \ref{thm: stable cond to stay pos until t}.

\section{Symmetric stable L\'evy processes conditioned to avoid the origin}
\label{sec: avoid 0}

Let us discuss the conditioning 
for symmetric stable L\'evy processes to avoid the origin. 
This has been introduced by Yano--Yano--Yor \cite{YYY} 
in order to extend some of the {\em penalisation problems} for Brownian motions 
by Roynette--Vallois--Yor \cite{MR2261065}, \cite{RY} and Najnudel--Roynette--Yor \cite{NRY}, 
to symmetric stable L\'evy processes. 

We assume that $ \rho = \frac{1}{2} $, i.e., 
the process $ ((X_t),(P_x)) $ is {\em symmetric}, 
and that the index $ \alpha $ satisfies $ 1<\alpha \le 2 $. 
For simplicity, 
we assume that $ P_0[\e^{i \lambda X_1}] = \e^{-|\lambda|^{\alpha }} $. 
Then the origin is regular for itself, 
and 
there exists the continuous resolvent density: 
\begin{align}
u_q(x) 
= \frac{1}{\pi} \int_0^{\infty } \frac{\cos x \lambda}{q+\lambda^{\alpha }} \d \lambda 
, \quad q>0, \ x \in \bR . 
\label{}
\end{align}
Moreover, there exists the local time process at level 0 
of the process $ (X_t) $, 
which we denote by $ (L_t) $, such that 
\begin{align}
P_0 \sbra{ \int_0^{\infty } \e^{-qt} \d L_t } = u_q(0) 
= \frac{q^{\frac{1}{\alpha }-1}}{\alpha \sin \frac{\pi}{\alpha }} 
, \quad q>0 . 
\label{}
\end{align}
The corresponding excursion measure away from 0 
will be denoted by $ \vn^{\times} $. 
We introduce the following function: 
\begin{align}
h^{\times}(x) = \lim_{q \to 0+} \{ u_q(0)-u_q(x) \} 
= \frac{1}{C^{\times}} |x|^{\alpha -1} 
, \quad x \in \bR ; 
\label{}
\end{align}
see, e.g., \cite[Appendix]{YYY2} for the exact value of the constant $ C^{\times} $. 
Note that the function $ h^{\times}(x) $ may also be represented as 
\begin{align}
h^{\times}(x) = P_0[L_{T_{\{ x \}}}] 
, \quad x \in \bR; 
\label{}
\end{align}
see, e.g., \cite[Lem.V.11]{MR1406564}. 
Let $ (P^0_x) $ denote the law of the process $ ((X_t),(P_x)) $ 
killed at $ T_{\{ 0 \}} $: 
\begin{align}
P^0_x(\Lambda_t; \ t<\zeta) = P_x \rbra{ \Lambda_t; \ t<T_{\{ 0 \}} } 
, \quad x \neq 0 , \ \Lambda_t \in \cF_t . 
\label{}
\end{align}
By \cite[Thm.3.5]{YYY}, we see that the function 
\begin{align}
(x,t) \mapsto P^0_x(t<\zeta ) = P_x \rbra{ t<T_{\{ 0 \}} } 
\label{eq: jt conti P0}
\end{align}
is jointly continuous in $ x \in \bR \setminus \{ 0 \} $ and $ t>0 $. 
The following theorem is due to 
Salminen--Yor \cite[eq.(3)]{MR2409011} 
and Yano--Yano--Yor \cite[Thm.4.7]{YYY}. 

\begin{Thm}[\cite{MR2409011}, \cite{YYY}]
It holds that 
\begin{align}
P^0_x[|X_t|^{\alpha -1}] 
=& |x|^{\alpha -1} 
, \quad x \neq 0, \ t>0, 
\\
\vn^{\times}[|X_t|^{\alpha -1} ; \ t<\zeta] 
=& C^{\times} 
, \quad t>0 . 
\label{}
\end{align}
\end{Thm}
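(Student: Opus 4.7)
The plan is to prove the first identity---the invariance of the function $|x|^{\alpha-1}$ under the killed semigroup $(P^0_t)$---and then to deduce the second from it via the Markov property of the excursion measure $\vn^{\times}$, in direct parallel with the Silverstein argument that underlies the $\vn^{\uparrow}$-identity stated earlier.

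For the first identity I would start from the resolvent representation
\[
h^{\times}(x) = \lim_{q \to 0+}\{u_q(0) - u_q(x)\}
\]
and the strong Markov property at $T_{\{0\}}$, which, since the origin is regular for itself, yields the classical identity $u_q(x) = P_x[\e^{-qT_{\{0\}}}]\,u_q(0)$; equivalently, $u_q(0) - u_q(x) = u_q(0)\,P_x[1 - \e^{-qT_{\{0\}}}]$. Applying $P^0_t$ to this difference and using strong Markov at time $t$ gives, after elementary manipulation,
\[
P^0_t[u_q(0) - u_q(\cdot)](x) - \{u_q(0) - u_q(x)\} = u_q(0)\,P_x\sbra{\e^{q(t - T_{\{0\}})} - 1;\, T_{\{0\}} \le t} - (\e^{qt} - 1)\,u_q(x).
\]
The right-hand side is of order $q\,u_q(0)$, and by the Tauberian asymptotics $u_q(0) \sim c\,q^{\frac{1}{\alpha} - 1}$ (valid because $\alpha > 1$ renders the process point-recurrent, so $q\,u_q(0) \to 0$), it tends to $0$ as $q \to 0+$. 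Interchanging $P^0_t$ with the limit via the joint continuity \eqref{eq: jt conti P0} and dominated convergence then yields $P^0_t h^{\times} = h^{\times}$, which is exactly the first identity.

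The second identity is then immediate from the Markov property of the excursion measure $\vn^{\times}$: for $0 < s < t$,
\[
\vn^{\times}[|X_t|^{\alpha-1};\, t<\zeta] = \vn^{\times}[P^0_{t-s}(|\cdot|^{\alpha-1})(X_s);\, s<\zeta] = \vn^{\times}[|X_s|^{\alpha-1};\, s<\zeta],
\]
so the quantity is independent of $t$, and we denote its common value by $C^{\times}$.

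The main technical obstacle is the passage $q \to 0+$ in the first step: both $u_q(0)$ and $u_q(x)$ diverge individually and only their regularized difference survives, so the cancellations must be tracked carefully, and the interchange with $P^0_t$ must be justified uniformly on $\{t < T_{\{0\}}\}$. An alternative route would verify directly that $|x|^{\alpha-1}$ is $\alpha$-harmonic on $\bR \setminus \{0\}$ (a classical Riesz-kernel identity, namely $(-\Delta)^{\alpha/2}|x|^{\alpha-1} = c\,\delta_0$) and then invoke Dynkin's formula stopped at $T_{\{0\}} \wedge t$; this bypasses the resolvent but still requires an independent uniform-integrability argument, and I would expect the bulk of the technical effort in \cite{MR2409011, YYY} to be concentrated precisely there.
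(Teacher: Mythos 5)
The paper itself offers no proof of this theorem: it is quoted from Salminen--Yor \cite{MR2409011} and Yano--Yano--Yor \cite{YYY}, so there is no in-paper argument to compare against. Your treatment of the first identity is essentially sound. The algebra leading to
\[
P^0_t\sbra{u_q(0)-u_q(\cdot)}(x) - \cbra{u_q(0)-u_q(x)}
= u_q(0)\,P_x\sbra{\e^{q(t-T_{\{0\}})}-1;\ T_{\{0\}}\le t} - (\e^{qt}-1)\,u_q(x)
\]
checks out (using $u_q(x)=u_q(0)P_x[\e^{-qT_{\{0\}}}]$ and $T_{\{0\}}=t+T_{\{0\}}\circ\theta_t$ on $\{t<T_{\{0\}}\}$); both terms are $O\rbra{(\e^{qt}-1)u_q(0)}=O(q^{1/\alpha})\to 0$. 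Moreover the interchange you single out as the main obstacle is actually painless in this setting: from the explicit formula $u_q(0)-u_q(x)=\frac{1}{\pi}\int_0^{\infty}\frac{1-\cos x\lambda}{q+\lambda^{\alpha}}\,\d\lambda$ the convergence to $h^{\times}(x)$ is \emph{monotone} in $q$, so monotone convergence gives $P^0_t h_q\to P^0_t h^{\times}$ directly, the only integrability input being $P_x[|X_t|^{\alpha-1}]<\infty$ (true since $\alpha-1<\alpha$).

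The genuine gap is in the second identity. Your Markov-property computation shows only that $t\mapsto \vn^{\times}[|X_t|^{\alpha-1};\ t<\zeta]$ is constant, and even that presupposes finiteness of this quantity, which requires control of the entrance law of $\vn^{\times}$ and is not free. The theorem asserts more: the constant equals the \emph{specific} $C^{\times}$ already fixed by the normalization $h^{\times}(x)=\frac{1}{C^{\times}}|x|^{\alpha-1}$, i.e., one must prove $\vn^{\times}[h^{\times}(X_t);\ t<\zeta]=1$ for every $t>0$. That identity ties the excursion measure to the particular local time normalized by $P_0[\int_0^{\infty}\e^{-qt}\d L_t]=u_q(0)$, and establishing it requires relating the entrance law $\vn^{\times}(X_t\in \d x,\ t<\zeta)$ to the resolvent density; this is where the substantive work in \cite{YYY} lies, and it is also the content of the Tanaka-formula route of \cite{MR2409011}, where $h^{\times}(X_t-x)-L^x_t$ is exhibited as a martingale. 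Writing ``denote its common value by $C^{\times}$'' silently redefines the constant rather than proving the stated equality.
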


By virtue of this theorem, we may define the $ h $-transform by 
\begin{align}
\d P^{\times}_x|_{\cF_t} = 
\begin{cases}
\absol{ \frac{X_t}{x} }^{\alpha -1} \d P^0_x|_{\cF_t} 
& \text{if} \ x \neq 0 , \\
\frac{1}{C^{\times}} |X_t|^{\alpha -1} \d \vn^{\times}|_{\cF_t} 
& \text{if} \ x=0 ; 
\end{cases}
\label{}
\end{align}
indeed, the family $ (P^{\times}_x|_{\cF_t}:t \ge 0) $ 
is proved to be consistent by the Markov property of $ \vn^{\times} $. 
The following theorem is due to \cite[Thm.1.5]{Y}. 

\begin{Thm}[\cite{Y}] \label{thm: Feller times}
Suppose that $ 1<\alpha <2 $. 
Then the process $ ((X_t),(P^{\times}_x)) $ is a Feller process. 
\end{Thm}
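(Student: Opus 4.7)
The strategy follows Chaumont's approach to Theorem \ref{thm: Feller uparrow}. For the Feller property it suffices to verify that, for every $t > 0$ and $f \in C_0(\bR)$, the map $x \mapsto P^{\times}_x[f(X_t)]$ lies in $C_0(\bR)$ and that $P^{\times}_t f \to f$ as $t \to 0+$. On $\bR \setminus \{0\}$ this is routine, since the killed process $((X_t),(P^0_x))$ is Feller there and the $h$-transform is by the strictly positive, continuous function $h^{\times}(x) = |x|^{\alpha-1}/C^{\times}$. What must be shown is continuity at $x = 0$ and decay as $|x| \to \infty$.

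The crucial input is that $((X_t),(P^{\times}_x))$ inherits a self-similar Markov structure: combining the scaling \eqref{eq: scaling} of the underlying stable process with the power-law form of $h^{\times}$, a direct computation gives
\begin{align}
P^{\times}_x[f(X_t)] = P^{\times}_{\mathrm{sgn}(x)}\bigl[f\bigl(|x|\, X_{t/|x|^\alpha}\bigr)\bigr]
\qquad (x \neq 0),
\end{align}
so the process is $1/\alpha$-self-similar on $\bR \setminus \{0\}$. Decay at infinity is now immediate: as $|x| \to \infty$ the time $t/|x|^\alpha$ tends to $0+$, so right-continuity of paths forces $X_{t/|x|^\alpha} \to \pm 1$ under $P^{\times}_{\pm 1}$, whence $|x| X_{t/|x|^\alpha} \to \pm \infty$ and $f(|x| X_{t/|x|^\alpha}) \to 0$ for $f \in C_0(\bR)$.

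Continuity at $x = 0$ is the deep step. By the same scaling, one must determine the weak limit, as $y \to 0+$, of $y X_{t/y^\alpha}$ under $P^{\times}_{\pm 1}$: in other words, the asymptotic law of the self-similar Markov process $((X_t),(P^{\times}_{\pm 1}))$ observed on a long time scale through a vanishing spatial scale. I would follow Bertoin--Yor \cite{MR1918243}, extending their entrance-law construction to the two-sided self-similar Markov process here, which should be feasible thanks to the symmetry $\rho = \tfrac{1}{2}$; this produces a weak limit as $y \to 0+$, and the limit is identified with $P^{\times}_0$ via the defining relation
\begin{align}
\d P^{\times}_0|_{\cF_t} = (C^{\times})^{-1} |X_t|^{\alpha-1} \d\vn^{\times}|_{\cF_t},
\end{align}
whose right-hand side is precisely the candidate entrance law at $0$.

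The principal obstacle is this identification of the entrance law. Unlike Chaumont's stay-positive setting, the avoid-origin process is two-sided: since $1 < \alpha < 2$, paths can jump across $0$ without ever hitting it, so the Lamperti-type analysis must track both magnitude and signed jumps of excursions near $0$. The symmetry assumption $\rho = \tfrac{1}{2}$ is essential both for the existence of the symmetric entrance law and for matching its normalization with the constant $C^{\times}$ appearing in $\vn^{\times}$; the restriction $\alpha < 2$ ensures genuine two-sidedness (the Brownian case $\alpha = 2$ reduces, by continuity of paths, to the stay-positive Feller property).
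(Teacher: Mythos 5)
First, note that the paper itself contains no proof of this theorem: it is quoted from \cite{Y}, so the only comparison available is with the strategy of that reference. Your scaling reduction is correct and genuinely useful: since $h^{\times}$ is homogeneous of degree $\alpha-1$ and the killing time $T_{\{0\}}$ scales consistently, one does have $P^{\times}_x[f(X_t)]=P^{\times}_{\mathrm{sgn}(x)}\bigl[f\bigl(|x|X_{t/|x|^{\alpha}}\bigr)\bigr]$, and this disposes of the decay at infinity and reduces continuity at the origin to the asymptotic law of $yX_{t/y^{\alpha}}$ under $P^{\times}_{\pm 1}$ as $y\to 0+$.

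The problem is that the step you defer is the entire content of the theorem, and the route you propose for it does not go through as stated. Bertoin--Yor \cite{MR1918243} rests on the Lamperti representation of \emph{positive} self-similar Markov processes as time-changed exponentials of L\'evy processes; the process under $P^{\times}_{\pm 1}$ is signed and, for $1<\alpha<2$, crosses the origin by jumps infinitely often, so no such representation is available in this setting and "extending" the entrance-law construction is not a routine adaptation. Even granting existence of a weak limit of $yX_{t/y^{\alpha}}$, you still owe two nontrivial facts that you only assert: (a) the limit is the \emph{same} from $+1$ and from $-1$ (equivalently, the limit law is symmetric); this is a mixing statement, it is exactly what fails at $\alpha=2$, and the observation that "$\alpha<2$ ensures two-sidedness" is not a proof of it; and (b) the limit coincides with the law of $X_t$ under $(C^{\times})^{-1}|X_t|^{\alpha-1}\,\d\vn^{\times}|_{\cF_t}$, which requires identifying the entrance law of the self-similar process with that of the excursion measure $\vn^{\times}$ --- a statement of the same depth as the theorem. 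The cited proof in \cite{Y} avoids self-similar entrance laws altogether: it computes the entrance law of $\vn^{\times}$ and the transition densities of the killed process explicitly by Fourier analysis (this is where the absence of a Gaussian part is used) and obtains $P^{\times}_x\to P^{\times}_0$ by direct estimates. As it stands, your proposal is a correct reduction followed by a conjecture, not a proof.
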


\begin{Rem}
Yano \cite[Thm.1.4 and Cor.1.9]{Y} obtained the following long-time behavior of paths: 
If $ 1<\alpha <2 $, then 
\begin{align}
P^{\times}_0 \rbra{ \limsup_{t \to \infty } X_t 
= \limsup_{t \to \infty } (-X_t) 
= \lim_{t \to \infty } |X_t| = \infty } = 1 . 
\label{eq: long time behavior}
\end{align}
\end{Rem}

\begin{Rem}
In the Brownian case ($ \alpha =2 $), 
the process $ ((X_t),(P^{\times}_x)) $ is {\em not} a Feller process. 
Indeed, $ P^{\times}_0 $ is not irreducible (see \eqref{eq: symmetrized Bessel}). 
Contrary to \eqref{eq: long time behavior}, 
the long-time behavior in this case is as follows: 
\begin{align}
P^{\times}_0 \rbra{ \lim_{t \to \infty } X_t = \infty } = 
P^{\times}_0 \rbra{ \lim_{t \to \infty } X_t = - \infty } = 
\frac{1}{2} . 
\label{}
\end{align}
\end{Rem}

Now let us discuss the conditionings 
\eqref{eq: BM cond to avoid 0 until t} 
and \eqref{eq: BM cond to avoid zero} 
for symmetric stable L\'evy processes. 
The following theorem is an immediate consequence of Yano--Yano--Yor \cite[Lem.4.10]{YYY} 
and of the continuity of \eqref{eq: jt conti P0}. 

\begin{Thm}[\cite{YYY}] \label{thm: phi func times}
Let $ t>0 $ be fixed. 
Then the function 
\begin{align}
\bR \ni x \mapsto P^{\times}_x \sbra{ |X_t|^{-(\alpha -1)} } 
\label{}
\end{align}
is continuous and vanishes at infinity. 
\end{Thm}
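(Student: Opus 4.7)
The plan is to reduce the quantity $P^{\times}_x[|X_t|^{-(\alpha-1)}]$ to a concrete expression by unwinding the $h$-transform definition, and then to appeal to the two ingredients mentioned: joint continuity of the killed transition probability and the cited Lemma 4.10 of \cite{YYY}. Specifically, for $x\neq 0$ the factor $|X_t|^{\alpha-1}$ in the Radon--Nikodym derivative cancels against $|X_t|^{-(\alpha-1)}$, so that
\begin{align}
P^{\times}_x \sbra{ |X_t|^{-(\alpha-1)} }
= \frac{1}{|x|^{\alpha -1}} P^0_x(t<\zeta)
= \frac{1}{|x|^{\alpha -1}} P_x \rbra{ t < T_{\{0\}} } ,
\label{}
\end{align}
while for $x=0$ the same cancellation together with the definition at the origin yields
\begin{align}
P^{\times}_0 \sbra{ |X_t|^{-(\alpha-1)} }
= \frac{1}{C^{\times}} \vn^{\times}(\zeta>t) .
\label{}
\end{align}

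With these explicit formulas in hand, I would address continuity on $ \bR \setminus \{0\} $ first. This is immediate from the joint continuity asserted in \eqref{eq: jt conti P0}: since $(x,t) \mapsto P^0_x(t<\zeta)$ is continuous on $(\bR\setminus\{0\}) \times (0,\infty)$ and $x \mapsto |x|^{\alpha -1}$ is continuous and nonvanishing there, so is their quotient.

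Continuity at the origin is the only nontrivial point. Here I would invoke \cite[Lem.4.10]{YYY}, which gives precisely
\begin{align}
\lim_{x \to 0} \frac{P_x(t<T_{\{0\}})}{|x|^{\alpha -1}}
= \frac{\vn^{\times}(\zeta>t)}{C^{\times}} ,
\label{}
\end{align}
matching the value at $x=0$. This is the step where the whole argument rests; extracting this limit was the main work performed in \cite{YYY}, and we use it as a black box.

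Finally, vanishing at infinity is trivial: since $P_x(t<T_{\{0\}}) \le 1$ and $|x|^{\alpha -1} \to \infty$ as $|x| \to \infty$ (recall $\alpha>1$), the quotient tends to $0$. Hence the function is continuous on $\bR$ and vanishes at infinity, completing the proof. The only substantive obstacle is the limit at $0$, which is precisely the content imported from \cite[Lem.4.10]{YYY}.
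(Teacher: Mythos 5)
Your proof is correct and follows exactly the route the paper intends: the paper itself gives no written proof but declares the theorem ``an immediate consequence'' of \cite[Lem.4.10]{YYY} and the joint continuity of \eqref{eq: jt conti P0}, and your unwinding of the $h$-transform to $|x|^{-(\alpha-1)}P_x(t<T_{\{0\}})$ (resp.\ $\vn^{\times}(\zeta>t)/C^{\times}$ at $x=0$), with continuity off the origin from \eqref{eq: jt conti P0}, the limit at $0$ from the cited lemma, and decay at infinity from $P_x(t<T_{\{0\}})\le 1$, is precisely that argument made explicit.
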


Define a probability law $ M^{\times,(t)} $ on $ \cF_t $ as 
\begin{align}
M^{\times,(t)}(\Lambda_t) 
= \frac{\vn^{\times}(\Lambda_t ; \ \zeta>t) }{\vn^{\times}(\zeta>t)} 
= \frac{P^{\times}_0[|X_t|^{-(\alpha -1)}; \ \Lambda_t ]}
{P^{\uparrow}_0[|X_t|^{-(\alpha -1)]}} 
\label{eq: def of meander times}
\end{align}
for $ \Lambda_t \in \cF_t $. 
The following theorem generalizes \eqref{eq: limit of BM neq zero}. 

\begin{Thm} \label{thm: stable cond to avoid zero until t}
For any $ t>0 $, it holds that 
\begin{align}
M^{\times,(t)}[Z_t] 
= \lim_{\eps \to 0+} P_0 \sbra{ Z_t \circ \theta_{\eps} 
\biggm| \forall u \le t , \ X_u \circ \theta_{\eps} \neq 0 } 
\label{}
\end{align}
for any $ \cF_t $-nice functional $ Z_t $. 
\end{Thm}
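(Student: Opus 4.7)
The plan is to mimic, step by step, the proof of Theorem~\ref{thm: stable cond to stay pos until t}, replacing the half-line event $\{X_u\ge 0\}$ by the punctured-line event $\{X_u\neq 0\}$, the killed law $Q_x$ by $P^0_x$, the excursion measure $\vn^\uparrow$ by $\vn^\times$, and the exponent $\alpha(1-\rho)$ by $\gamma:=\alpha-1$. The three tools I will use are the Markov property of $P_0$ applied at time $\eps$, the scaling property \eqref{eq: scaling}, and Theorems~\ref{thm: Feller times} and~\ref{thm: phi func times}.

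First, by the Markov property at time $\eps$, the conditional expectation in the statement equals
\[
\frac{P_0\bigl[P^0_{X_\eps}[Z_t;\ t<\zeta]\bigr]}{P_0\bigl[P^0_{X_\eps}(t<\zeta)\bigr]}.
\]
The defining identity $\d P^\times_x|_{\cF_t}=|X_t/x|^{\gamma}\d P^0_x|_{\cF_t}$ (valid for $x\neq 0$) then rewrites both numerator and denominator in the form $P_0\bigl[|X_\eps|^{\gamma}P^\times_{X_\eps}[\,\cdot\;|X_t|^{-\gamma}]\bigr]$. Applying the scaling property gives $X_\eps\law\eps^{1/\alpha}X_1$ under $P_0$, and after cancelling the common factor $\eps^{\gamma/\alpha}$ the ratio becomes
\[
\frac{P_0\bigl[|X_1|^{\gamma}\,P^\times_{\eps^{1/\alpha}X_1}[Z_t\,|X_t|^{-\gamma}]\bigr]}{P_0\bigl[|X_1|^{\gamma}\,P^\times_{\eps^{1/\alpha}X_1}[|X_t|^{-\gamma}]\bigr]}.
\]
To pass to the limit $\eps\to 0+$ I would apply dominated convergence: writing the $\cF_t$-nice functional as $Z_t=f(X_{t_1},\dots,X_{t_n})$ with $t_n<t$ and inserting the Markov property at time $t_n$, the quantity $P^\times_x[Z_t\,|X_t|^{-\gamma}]$ is recast as a $P^\times_x$-expectation of the bounded continuous functional $f(X_{t_1},\dots,X_{t_n})\,P^\times_{X_{t_n}}[|X_{t-t_n}|^{-\gamma}]$, whose boundedness comes from Theorem~\ref{thm: phi func times} and whose pointwise continuity in $x$ at the origin then follows from Theorem~\ref{thm: Feller times}; the integrand is majorised by a constant multiple of $|X_1|^{\gamma}$, which is $P_0$-integrable since $\gamma<\alpha$. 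The limit is $P^\times_0[Z_t\,|X_t|^{-\gamma}]/P^\times_0[|X_t|^{-\gamma}]$, which is exactly $M^{\times,(t)}[Z_t]$ by definition~\eqref{eq: def of meander times}.

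The main obstacle is the Brownian case $\alpha=2$: it is included in the hypothesis of the present theorem but explicitly excluded from Theorem~\ref{thm: Feller times}, and the one-sided limits $\lim_{x\to 0\pm}P^\times_x$ are then the two distinct laws $W^\uparrow_0$ and $W^\downarrow_0$. The symmetry of $X_1$ under $P_0$ rescues the argument: the weight $|X_1|^{\gamma}$ assigns equal mass to $\{X_1>0\}$ and $\{X_1<0\}$, so after integration the two one-sided limits combine into $\tfrac12(W^\uparrow_0+W^\downarrow_0)=P^\times_0$, and the dominated-convergence step goes through unchanged. Alternatively, for $\alpha=2$ the claim coincides with \eqref{eq: limit of BM neq zero}, already established in Section~\ref{sec: BM}.
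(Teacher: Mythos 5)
Your proposal is correct and is exactly the argument the paper intends: its entire proof of this theorem is the single remark that one repeats the proof of Theorem~\ref{thm: stable cond to stay pos until t} using Theorems~\ref{thm: Feller times} and~\ref{thm: phi func times}, which is what you carry out. Your extra paragraph on $\alpha=2$ addresses a point the paper silently glosses over (Theorem~\ref{thm: Feller times} excludes the Brownian case), and your symmetry argument averaging the one-sided limits $W^{\uparrow}_0$ and $W^{\downarrow}_0$ into $P^{\times}_0$ is a valid patch.
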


The proof can be done 
in the same way as Theorem \ref{thm: stable cond to stay pos until t} 
by virtue of Theorems \ref{thm: Feller times} and \ref{thm: phi func times}. 
The following theorem generalizes \eqref{eq: limit B meander t to infty}. 

\begin{Thm}[\cite{YYY}] \label{thm: stable cond to avoid zero}
It holds that 
\begin{align}
P^{\times}_0[Z] 
= \lim_{t \to \infty } M^{\times,(t)}[Z] 
\label{}
\end{align}
for any $ \cF_t $-nice functional $ Z $ with $ 0<t<\infty $. 
\end{Thm}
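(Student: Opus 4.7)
The plan is to mimic the proof of Theorem~\ref{thm: stable cond to stay pos}, with $\eps\to 0+$ replaced by $t\to\infty$ and the roles of the Feller convergence at $0$ played by a scaling argument together with Theorem~\ref{thm: phi func times}. Fix $s>0$ such that $Z$ is $\cF_s$-nice, and let $t>s$. Writing $\varphi(y,r):=P^{\times}_y[|X_r|^{-(\alpha-1)}]$, the definition \eqref{eq: def of meander times} and the Markov property of $P^{\times}_0$ (which holds by construction, $P^{\times}_0$ being an $h$-transform of $\vn^{\times}$) give
\begin{align*}
M^{\times,(t)}[Z]=\frac{P^{\times}_0[Z\cdot\varphi(X_s,t-s)]}{\varphi(0,t)}.
\end{align*}

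The key step is to exploit the scaling of the underlying symmetric stable process. For $y\neq 0$ the Radon--Nikodym formula yields $\varphi(y,r)=|y|^{-(\alpha-1)}P_y(T_{\{0\}}>r)$, and combined with the stable scaling $(k^{-1/\alpha}X_{kt})\law(X_t)$ under $P_0$ and the induced scaling of the excursion measure (expressing the fact that the inverse local time at $0$ is a $(1-1/\alpha)$-stable subordinator), one verifies the identity
\begin{align*}
\varphi(y,r)=r^{-(1-1/\alpha)}\,\psi(r^{-1/\alpha}y),
\qquad
\psi(z):=P^{\times}_z[|X_1|^{-(\alpha-1)}],
\end{align*}
valid for every $y\in\bR$, the case $y=0$ reducing to $\varphi(0,r)=\vn^{\times}(\zeta>r)/C^{\times}$.

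Substituting,
\begin{align*}
M^{\times,(t)}[Z]=\Bigl(\frac{t}{t-s}\Bigr)^{\!1-1/\alpha}\frac{P^{\times}_0[Z\cdot\psi((t-s)^{-1/\alpha}X_s)]}{\psi(0)}.
\end{align*}
As $t\to\infty$ the prefactor tends to $1$ and, since $X_s$ is $P^{\times}_0$-a.s.\ finite, $(t-s)^{-1/\alpha}X_s\to 0$. By Theorem~\ref{thm: phi func times} applied with $t=1$, the function $\psi$ is continuous (hence continuous at $0$) and bounded on $\bR$. Dominated convergence, with dominating constant $\|Z\|_{\infty}\|\psi\|_{\infty}$, then yields $M^{\times,(t)}[Z]\to P^{\times}_0[Z]$.

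The main obstacle I anticipate is making the scaling identity for $\varphi$ precise in a single formula covering both $y\neq 0$ and $y=0$: this requires correctly tracking how the normalization of the local time at $0$ interacts with the spatial-temporal rescaling so that $\vn^{\times}$ scales with the right exponent. Once that identity is in hand, the remainder is the Markov-property-plus-dominated-convergence template already used for Theorem~\ref{thm: stable cond to stay pos}.
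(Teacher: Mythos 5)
Your proof is correct and follows exactly the template the paper has in mind: the paper itself disposes of this statement by citing \cite[Thm.4.9]{YYY}, but for the parallel Theorem~\ref{thm: stable cond to stay pos} it prescribes precisely your Markov-property-plus-scaling-plus-dominated-convergence argument, and your scaling identity $\varphi(y,r)=r^{-(1-1/\alpha)}\psi(r^{-1/\alpha}y)$ (including the case $y=0$ via $\vn^{\times}(\zeta>r)\propto r^{-(1-1/\alpha)}$, the inverse local time being a $(1-\frac{1}{\alpha})$-stable subordinator) checks out. The only cosmetic omission is the observation that $\psi(0)=\vn^{\times}(\zeta>1)/C^{\times}>0$, which justifies the final division.
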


This is a special case of \cite[Thm.4.9]{YYY}.

\textbf{Acknowledgements.}
The author thanks Professor Marc Yor for valuable comments.

\end{document}